\documentclass[11pt]{amsart}

\usepackage[pagebackref=true, colorlinks=true, citecolor=blue, pdfborder={0 0 .1},breaklinks]{hyperref}

\title[Pseudomeasure solutions of the Navier-Stokes equations]
{Improved regularity and analyticity  of Cannone-Karch solutions of the  three-dimensional Navier-Stokes equations on the torus}

\author[D.M. Ambrose]{David M. Ambrose}
\address{Department of Mathematics, Drexel University, Philadelphia, PA 19104, USA}
\email{dma68@drexel.edu}

\author[M.C. Lopes Filho]{Milton C. Lopes Filho}
\address{Instituto de Matematica, Universidade Federal do Rio de Janeiro, Caixa Postal 68530,
Rio de Janeiro, RJ, 21941-909 Brazil}
\email{mlopes@im.ufrj.br}

\author[H.J. Nussenveig Lopes]{Helena J. Nussenzveig Lopes}
\address{Instituto de Matematica, Universidade Federal do Rio de Janeiro, Caixa Postal 68530,
Rio de Janeiro, RJ, 21941-909 Brazil}
\email{hlopes@im.ufrj.br}

\newtheorem{theorem}{Theorem}
\newtheorem{lemma}{Lemma}
\newtheorem{definition}{Definition}
\newtheorem{remark}{Remark}
\newtheorem{claim}{Claim}

\newcommand{\vertiii}[1]{{\left\vert\kern-0.25ex\left\vert\kern-0.25ex\left\vert #1
    \right\vert\kern-0.25ex\right\vert\kern-0.25ex\right\vert}}

\newcommand{\torus}{\mathbb{T}}
\newcommand{\real}{\mathbb{R}}

\newcommand{\cpmtwo}{\mathcal{PM}^{2}}
\newcommand{\czfour}{\mathcal{Z}^{4}}

\begin{document}

\begin{abstract}
We consider the three-dimensional Navier-Stokes equations, with initial data having second derivatives in the space of
pseudomeasures.  Solutions of this system with such data have been shown to exist previously by Cannone and Karch.
As the Navier-Stokes equations are a parabolic system, the solutions gain regularity at positive times.  We demonstrate
an improved gain of regularity at positive times as compared to that demonstrated by Cannone and Karch.  We further
demonstrate that the solutions are analytic at all positive times, with lower bounds given for the radius of analyticity.
\end{abstract}

\maketitle

\section{Introduction}

We consider global solutions of the three-dimensional incompressible Navier-Stokes equations on a periodic spatial domain,
\begin{equation}\label{NS}
\left\{
\begin{array}{ll}
\partial_{t}v+(v\cdot\nabla) v = -\nabla p + \mu\Delta v,& (t,x)\in(0,\infty)\times\mathbb{T}^{3}, \\
\mathrm{div}(v)=0,& (t,x)\in[0,\infty)\times\mathbb{T}^{3},\\
v(\cdot,0)=v_{0},& (t,x)\in\{0\}\times\mathbb{T}^{3}.
\end{array}\right.
\end{equation}
We will specifically take $\mathbb{T}^{3}$ to be $[0,2\pi]^{3}$ with periodic boundary conditions.
Cannone and Karch have proved the existence of small global solutions for this problem when the initial data is
taken to have two derivatives in the space of pseudomeasures \cite{cannoneKarch}.

The Fourier series for a function, $f,$ on domain $\mathbb{T}^{3}$ may be written
\begin{equation}\label{fourierSeries}
f(x)=\sum_{k\in\mathbb{Z}^{3}}\hat{f}(k)e^{i k \cdot x},
\end{equation}
where $\hat{f}(k)$ is the $k$-th Fourier coefficient.

The space $PM^{a}$ for $a\geq0$ is defined below.

\begin{definition} \label{PMa} Let $a\geq 0$. Then the space of pseudomeasures $PM^{a}$ is the Banach space of distributions $f \in \mathcal{D}^\prime ( \mathbb{T}^{3})$ such that the norm $\|f\|_{PM^{a}}$ is finite, where
\begin{equation} \nonumber
\|f\|_{PM^{a}}=|\hat{f}(0)|+\sup_{k\in\mathbb{Z}^{3}}|k|^{a}|\hat{f}(k)|.
\end{equation}
\end{definition}

In what follows, we will always consider distributions with mean zero, so the inclusion of $|\hat{f}(0)|$ in calculations of norms will be unnecessary.

The Cannone-Karch result is that if the initial data is small in $PM^{2},$ then the three-dimensional Navier-Stokes
equations have a solution for all time \cite{cannoneKarch}.
Furthermore, the solution gains regularity in time, so that it is in $PM^{3-\varepsilon}$
for some $\varepsilon>0,$ at all positive times.

The space $PM^{2}$ used by Cannone and Karch is a critical space: the norm of a solution in this space is unchanged
when accounting for scaling invariance.  Other authors have studied other critical spaces for the three-dimensional
Navier-Stokes equations.  The most famous and most comprehensive result is that of Koch and Tataru, proving existence
of solutions for data in $BMO^{-1}$ \cite{kochTataru}.  Another critical space is $X^{-1},$ the space of functions which are
the derivative of a function in the Wiener algebra.  Lei and Lin proved existence of solutions for the three-dimensional
Navier-Stokes equations for $X^{-1}$ data \cite{leiLin}.  As the Navier-Stokes equations are parabolic, one naturally
expects gain of regularity of solutions as compared to the data, and one in fact expects analyticity of solutions at
positive times.  Analyticity of solutions has been proven for regular data by Foias and Temam \cite{foiasTemam}
and Grujic and Kukavica \cite{grujicKukavica}.  With data in $BMO^{-1},$ analyticity of solutions at positive times
has been established by Germain, Pavlovic, and Staffilani \cite{germainPavlovicStaffilani}.  With data in $X^{-1},$
analyticity of solutions at positive times has been established by Bae \cite{baePAMS} in the case of
spatial domain $\mathbb{R}^{3}$ and by the authors in the case of spatial domain $\mathbb{T}^{3}$ \cite{ALN4}.
Note that it is straightforward to see that neither of the spaces $X^{-1}$ and $PM^{2}$ are a subset of the other,
so the present result and the result of \cite{ALN4} are complementary.

In the present work, we first establish improved parabolic gain of regularity for the Cannone-Karch solutions with
$PM^{2}$ data, demonstrating the full gain of two derivatives expected from the Laplacian term, in a time-averaged sense.
That is, we will first show that solutions are in $PM^{4}$ at almost every $t>0.$  We then furthermore establish that the
solutions are in fact analytic at every positive time.  Our arguments are similar to the arguments of our prior work
\cite{ALN4}, which followed ideas introduced in \cite{baePAMS} and \cite{baeBiswasTadmor}, but we must introduce
a new function space on space-time in which to work.

Both the work of Cannone and Karch \cite{cannoneKarch} and the present work use a two-norm approach, but with different
forms for the second (higher-regularity) norm.
Cannone and Karch demonstrate the gain of regularity at positive times by showing that, for $2<a<3,$ the quantity
\begin{equation*}
\sup_{t>0}t^{\frac{a}{2}-1}\|u\|_{PM^{a}}
\end{equation*}
is finite.  Rather than use a supremum in time with a power of $t,$ our method uses an integral in
time of a higher norm to show the gain of regularity.
This method using an integral yields for us the full expected gain of two derivatives at positive times.
We conclude that the time-integrated form used here is more robust and gives stronger results.

The plan of the paper is as follows.  In Section \ref{preliminarySection}, we state key lemmas and define our space-time
function spaces.  In Section \ref{existenceSection}, we state and prove our first main theorem, Theorem \ref{existenceTheorem}, the content of which is existence of solutions for all time
to the three-dimensional Navier-Stokes equations with sufficiently small data in $PM^{2},$ with the previously mentioned
gain of two spatial derivatives at almost every positive time.  Then in Section \ref{analyticitySection}, we state and prove
our second main theorem, Theorem \ref{analyticityTheorem}, the content of which is that the
solutions previously proved to exist are in fact analytic at every positive time.

\section{Preliminaries}\label{preliminarySection}

We use the notations $\mathbb{Z}^{3}_{*}$ to mean $\mathbb{Z}^{3}_{*}=\mathbb{Z}^{3}\setminus\{0\}$ and, for $x = (x_1,x_2,x_3)$, $|x|_{\infty} = max\{|x_1|,|x_2|,|x_3|\}$. Note that $(\sqrt{3}/3) |x| \leq |x|_{\infty} \leq  |x|$.

We have the following key lemma.
\begin{lemma} \label{keyLemma}
There exists $c>0$ such that for all $k\in\mathbb{Z}^{3}_{*},$
\begin{equation}\nonumber
\sum_{j\in\mathbb{Z}^{3}_{*}, j\neq k}\frac{1}{|j|^{2}|k-j|^{2}}\leq\frac{c}{|k|}.
\end{equation}
\end{lemma}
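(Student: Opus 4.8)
The plan is to treat the sum as a discrete version of the continuous convolution $\int_{\mathbb{R}^{3}} |y|^{-2}|x-y|^{-2}\,dy$, which (since $2<3$ at each singularity and $2+2>3$ for convergence at infinity) converges and scales like $|x|^{-1}$; this both predicts the exponent on the right-hand side and signals that the dominant contributions come from the two ``resonant'' regions near $j=0$ and near $j=k$. Accordingly, I would partition $\mathbb{Z}^{3}_{*}\setminus\{k\}$ into three pieces, overlap being harmless for an upper bound: (i) $|j|\le |k|/2$; (ii) $|k-j|\le |k|/2$; and (iii) $|j|>|k|/2$ and $|k-j|>|k|/2$. Every admissible $j$ lies in at least one of these.

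On region (i) the triangle inequality gives $|k-j|\ge |k|/2$, so $|k-j|^{-2}\le 4|k|^{-2}$ pulls out of the sum, leaving $4|k|^{-2}\sum_{0<|j|\le|k|/2}|j|^{-2}$. Region (ii) is identical after the substitution $j\mapsto k-j$. Thus both reduce to the lattice estimate $\sum_{0<|j|\le R}|j|^{-2}\le CR$ for $R\ge 1$, which I would obtain by grouping $j$ into dyadic shells $2^{m}\le|j|<2^{m+1}$: each shell holds $O(2^{3m})$ points contributing $O(2^{-2m})$ apiece, so the shell sum is $O(2^{m})$ and the geometric sum over $m\le\log_{2}R$ is $O(R)$ (the comparison $|j|_{\infty}\le|j|\le\sqrt{3}\,|j|_{\infty}$ makes the point-counting explicit). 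Feeding $R=|k|/2$ back in yields a bound of order $|k|^{-1}$ from each of (i) and (ii).

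For region (iii) I would split once more at $|j|=2|k|$. On the annulus $|k|/2<|j|\le 2|k|$ one has both $|j|^{-2}\le 4|k|^{-2}$ and $|k-j|^{-2}\le 4|k|^{-2}$, so each summand is at most $16|k|^{-4}$; since the number of lattice points with $|j|\le 2|k|$ is $O(|k|^{3})$, the annulus contributes $O(|k|^{-1})$. On the far tail $|j|>2|k|$ one has $|k-j|\ge|j|-|k|\ge|j|/2$, hence the summand is at most $4|j|^{-4}$, and a second dyadic count gives $\sum_{|j|>2|k|}|j|^{-4}\le C|k|^{-1}$. Adding the four contributions proves the claim.

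The only genuinely delicate point is the bookkeeping in region (iii): one must not pull a single factor out of the sum there, since that would leave a divergent tail, but instead use the annulus/tail split, so that near $|j|\sim|k|$ the $O(|k|^{3})$ lattice points are balanced against the $O(|k|^{-4})$ size of each term, while in the tail the decay $|j|^{-4}$ (summable in $\mathbb{Z}^{3}$) is exactly what produces $|k|^{-1}$ rather than a mere constant. I would also note that for bounded $|k|$ the inequality is immediate once convergence of the sum is known, so every dyadic estimate need only hold for $R\ge 1$, which is automatic since $|k|\ge 1$ on $\mathbb{Z}^{3}_{*}$.
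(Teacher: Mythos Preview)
Your argument is correct and follows essentially the same strategy as the paper: the paper decomposes $\mathbb{Z}^{3}_{*}\setminus\{k\}$ into four regions $Q_{1},\ldots,Q_{4}$ that match your tail, region~(i), region~(ii), and annulus respectively, and handles each piece by the same mechanism (pull out the large factor, count lattice points). The only cosmetic differences are that the paper works with the $\ell^{\infty}$ norm and integer ``cubic shells'' $\{|j|_{\infty}=l\}$ rather than the Euclidean norm and dyadic annuli, and uses the cutoff $4|k|_{\infty}$ instead of $2|k|$ or $|k|/2$; neither choice affects the substance of the proof.
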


\begin{proof}
Fix $k \in \mathbb{Z}^3_{*}$. We divide $\mathbb{Z}^3_{*} \setminus \{k\}$ into four regions:
\[Q_1 = \{j \in \mathbb{Z}^3_{*} \setminus \{k\}: |j|_{\infty} > 4|k|_{\infty}\}.\]
\[Q_2 = \{j \in \mathbb{Z}^3_{*} \setminus \{k\}: |j|_{\infty} < (1/4)|k|_{\infty}.\}\]
\[Q_3 = \{j \in \mathbb{Z}^3_{*} \setminus \{k\}: |k-j|_{\infty} < (1/4)|k|_{\infty}.\}\]
\[Q_4 = \{j \in \mathbb{Z}^3_{*} \setminus \{k\}: |j|_{\infty} \leq 4|k|_{\infty}\} \setminus (Q_2 \cup Q_3).\]
Next, let
\[\sum_{j\in\mathbb{Z}^{3}_{*}, j\neq k}\frac{1}{|j|^{2}|k-j|^{2}} = \sum_{i=1}^4 \sum_{j \in Q_i} \frac{1}{|j|^{2}|k-j|^{2}}\]
We will prove the $\mathcal{O}(1/|k|)$ bound for each $i=1,\ldots,4$ separately.
We first look at $i=2$. In $Q_2$,
\[|k-j| \geq \sqrt{3} |k-j|_{\infty} \geq \sqrt{3} (|k|_{\infty} - |j|_{\infty}) \geq 3\sqrt{3}/4 |k|_{\infty} \geq 3/4 |k|.\]
Therefore,
\[\sum_{j \in Q_2} \frac{1}{|j|^{2}|k-j|^{2}} \leq \frac{16}{9|k|^2} \sum_{Q_2} \frac{1}{j^2}.\]
Let $S_l = \{j \in Q_2: |j| = l\}$. We decompose $Q_2$ into the cubic shells $S_l$, so that
\[Q_2 = \bigcup_{l=1}^{[(1/4)|k|_{\infty}]} S_l,\]
where $[\ast]$ denotes the usual biggest integer less than $\ast$.
The cubic shell $S_l$, which has edges of size $2l$, has exactly $24 l^2$ integer points. Moreover, in each of these points, $1/|j|^2 \leq 1/l^2$
Therefore,
\[\sum_{j \in Q_2} \frac{1}{|j|^{2}|k-j|^{2}} \leq \frac{16}{9|k|^2} \sum_{l=1}^{[(1/4)|k|_{\infty}]}(24 l^2) \frac{1}{l^2}= C\frac{1}{|k|^2} |k|_{\infty} \leq \frac{C}{|k|},\] as desired.

Exchanging $j$ and $k$ we see that \[\sum_{j \in Q_2} \frac{1}{|j|^{2}|k-j|^{2}} = \sum_{j \in Q_3} \frac{1}{|j|^{2}|k-j|^{2}},\] which takes care of the case $i=3$ by repeating the argument above.

Next we look at $i=4$. The number of integer points in $Q_4$ is bounded by $C |k|^3$, since it is contained in a cube of edge $4 |k|_{\infty}$ and $|k|_{\infty} \leq |k|$. Each point in $Q_4$ is at a distance bigger than $C|k|$ both from the origin and from $k$. Therefore, for each $j \in Q_4$,
\[\frac{1}{|j|^{2}|k-j|^{2}} \leq \frac{C}{|k|^4},\] which implies the desired estimate.

Finally, we look at $i=1$. We decompose $Q_1$ into cubic shells, writing $Q_1 = \bigcup_{l=4|k|_{\infty}+1}^{\infty} S_l$. For each $j \in S_l$,
$|j| \geq l$, so that $1/|j|^2 \leq 1/l^2$. Also, $4|k| \leq l$ and therefore $|k-j| \geq |j| - |k| \geq  l - l/4 = Cl.$
Thus, $1/|k-j|^2 \leq C/l^2$ in $S_l$. So, again using the fact that there are less than $C|l|^2$ integer points in $S_l$, it follows that
\[ \sum_{j \in Q_1} \frac{1}{|j|^{2}|k-j|^{2}} \leq C \sum_{l=4|k|_{\infty}+1}^{\infty} \frac{1}{l^2}.\]
Next note that, as the integrand $1/x^2$ is decreasing,
\[\frac{1}{4|k|} \geq \frac{1}{4|k|_{\infty}} = \int_{4|k|_{\infty}}^{\infty} \frac{dx}{x^2} = \sum_{l=4|k|_{\infty}}^{\infty} \int_{l}^{l+1} \frac{dx}{x^2} \geq \sum_{l=4|k|_{\infty}}^{\infty} \frac{1}{(l+1)^2} = \sum_{l=4|k|_{\infty}+1}^{\infty} \frac{1}{l^2},\]
which concludes the proof.

\end{proof}

We will use the following abstract fixed point result; the authors have used this result previously in \cite{ALN6}, \cite{ALN5},
\cite{ALN4}.
\begin{lemma} \label{fixedpoint}
Let ($X,$ $\vertiii{\cdot}_{X}$) be a Banach space. Assume that $\mathcal{B}:X \times X \to X$ is a continuous bilinear operator and let $\eta>0$ satisfy $\eta\geq \|\mathcal{B}\|_{X\times X\rightarrow X}$. Then, for any $x_0 \in X$ such that
\[4\eta \vertiii{x_0}_{X}<1,\]
there exists one and only one solution to the equation
\[x=x_0+\mathcal{B}(x,x) \qquad \text{ with } \vertiii{x}_{X} < \frac{1}{2\eta}.\]
Moreover, $\vertiii{x}_{X} \leq 2\vertiii{x_0}_{X}$.
\end{lemma}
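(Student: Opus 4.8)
The plan is to cast this as a standard contraction-mapping argument for the map $T(x) = x_0 + \mathcal{B}(x,x)$, locating the fixed point inside a closed ball whose radius is the smaller root of the natural quadratic coming from the self-mapping condition. First I would estimate, for $x$ in the closed ball $\overline{B_R} = \{x : \vertiii{x}_X \le R\}$, that
\[ \vertiii{T(x)}_X \le \vertiii{x_0}_X + \eta \vertiii{x}_X^2 \le \vertiii{x_0}_X + \eta R^2, \]
using the operator bound $\vertiii{\mathcal{B}(x,x)}_X \le \eta \vertiii{x}_X^2$. This sends $\overline{B_R}$ into itself provided $\eta R^2 - R + \vertiii{x_0}_X \le 0$. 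The hypothesis $4\eta\vertiii{x_0}_X < 1$ makes the discriminant positive, so the inequality holds for all $R$ between the two roots $R_\pm = (1 \pm \sqrt{1 - 4\eta\vertiii{x_0}_X})/(2\eta)$; I would take $R = R_-$, the smaller root, which satisfies $R_- < 1/(2\eta)$.

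Next I would verify that $T$ is a strict contraction on $\overline{B_{R_-}}$. For $x, y \in \overline{B_{R_-}}$, bilinearity gives the identity $T(x) - T(y) = \mathcal{B}(x, x-y) + \mathcal{B}(x-y, y)$, whence
\[ \vertiii{T(x)-T(y)}_X \le \eta\left(\vertiii{x}_X + \vertiii{y}_X\right)\vertiii{x-y}_X \le 2\eta R_- \vertiii{x-y}_X. \]
Since $2\eta R_- = 1 - \sqrt{1 - 4\eta\vertiii{x_0}_X} < 1$, the contraction constant is strictly less than $1$, and $\overline{B_{R_-}}$ is a closed subset of the Banach space $X$, hence a complete metric space. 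The Banach fixed point theorem then yields a unique fixed point $x \in \overline{B_{R_-}}$, and because $R_- < 1/(2\eta)$ this $x$ lies in the open ball $\vertiii{x}_X < 1/(2\eta)$, establishing existence of the desired solution.

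To obtain the quantitative bound I would apply the elementary inequality $1 - \sqrt{1-s} \le s$ for $s \in [0,1]$ with $s = 4\eta\vertiii{x_0}_X$, giving
\[ \vertiii{x}_X \le R_- = \frac{1 - \sqrt{1 - 4\eta\vertiii{x_0}_X}}{2\eta} \le \frac{4\eta\vertiii{x_0}_X}{2\eta} = 2\vertiii{x_0}_X. \]
Finally, for uniqueness across the entire open ball $\vertiii{x}_X < 1/(2\eta)$ --- not merely within $\overline{B_{R_-}}$ --- I would take two solutions $x$ and $y$ there and reuse the bilinear identity to get $\vertiii{x-y}_X \le \eta(\vertiii{x}_X + \vertiii{y}_X)\vertiii{x-y}_X$; since both norms are strictly below $1/(2\eta)$, the prefactor is strictly less than $1$, forcing $\vertiii{x-y}_X = 0$. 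I do not expect a genuine obstacle, as the result is classical; the only points requiring care are choosing $R_-$ so that the self-mapping and strict-contraction conditions hold simultaneously, and keeping the uniqueness claim on the larger ball separate from the contraction setup on the smaller one.
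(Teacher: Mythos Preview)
Your argument is correct: the self-mapping and contraction estimates are exactly the right ones, the choice $R=R_-$ handles both simultaneously, the bound $\vertiii{x}_X\le 2\vertiii{x_0}_X$ follows from $1-\sqrt{1-s}\le s$, and the separate uniqueness step on the full open ball of radius $1/(2\eta)$ is clean. There is no gap.

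As for comparison with the paper: the paper does not supply a proof of this lemma at all, stating explicitly that it omits the argument and referring instead to Cannone's monograph and to Auscher--Tchamitchian. The standard proofs in those references proceed either exactly as you do (Banach contraction on the ball of radius $R_-$) or by the equivalent Picard iteration $x_{n+1}=x_0+\mathcal{B}(x_n,x_n)$ with the same quadratic recurrence controlling $\vertiii{x_n}_X$. Your write-up is therefore a faithful rendition of the classical argument the paper defers to.
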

We do not provide proof here, but we instead refer the reader to \cite[p. 37, Lemma 1.2.6]{Cannone1995} and \cite{AuscherTchamitchian1999}, \cite{Cannone2003}.

We now give our function spaces.  All distributions we will consider in the sequel will have zero mean, so we define
our spaces only for distributions with zero mean.  All of our spaces will have norms defined in terms of the
Fourier coefficients as given in \eqref{fourierSeries}.
We have already defined the Banach space for our initial data, $PM^{2},$ through Definition \ref{PMa} with $a=2$.
We now need space-time versions of the pseudomeasure spaces.  The first of these will be
$\mathcal{PM}^{b}$.
\begin{definition} \label{calPMb}
Let $b \geq 0$. The space-time space of pseudomeasures is the set of distributions $f\in \mathcal{D}^\prime (\real_+\times\torus^3)$ such that the $\mathcal{PM}^b$-norm below is finite:
\begin{equation*}
\|f\|_{\mathcal{PM}^{b}}=\sup_{k\in\mathbb{Z}^{3}_{*}}\sup_{t\geq0}|k|^{b}|\hat{f}(t,k)|.
\end{equation*}
\end{definition}

A related function space on space-time will be used to demonstrate the parabolic gain of regularity.

\begin{definition} \label{calZc}
Let $c\geq 0$. We introduce the space of distributions $\mathcal{Z}^{c}$ such that the norm below is finite:
\begin{equation*}
\|f\|_{\mathcal{Z}^{c}}=\sup_{k\in\mathbb{Z}^{3}_{*}}\int_{0}^{\infty}|k|^{c}|\hat{f}(t,k)|\ \mathrm{d}t.
\end{equation*}
\end{definition}

In this work we will consider $\mathcal{PM}^{b}$ with $b=2$ and $\mathcal{Z}^{c}$ with $c=4$.

We will be applying Lemma \ref{fixedpoint} to show existence of a mild solution of the Navier-Stokes equations \eqref{NS} with data in $PM^{2}.$
We will use $X\equiv \mathcal{PM}^{2}\cap\mathcal{Z}^{4}$ for the function space  in Lemma \ref{fixedpoint}.
To this end we introduce a norm on $X=\mathcal{PM}^{2}\cap\mathcal{Z}^{4}$ given by
\begin{equation*} 
\vertiii{f}=\|f\|_{\mathcal{PM}^{2}}+\|f\|_{\mathcal{Z}^{4}}.
\end{equation*}

As mentioned above, in this work we will deal with $\mathcal{PM}^b$ only in the case $b=2$ and $\mathcal{Z}^c$ with $c=4$.
However other values of $b$ and $c$ can of course be relevant for different problems, and the authors plan to make additional use of these spaces, in particular for the Constantin-Lax-Majda equation in \cite{ALN6}.

\section{Existence of solutions}\label{existenceSection}

We now state our first main theorem.
\begin{theorem} \label{existenceTheorem}
There exists $\varepsilon>0$ such that for any $v_{0}\in PM^{2}$ satisfying
$\|v_{0}\|_{ PM^{2}} \leq \varepsilon,$ there exists a global mild solution
$v\in\mathcal{PM}^{2}\cap\mathcal{Z}^{4}$ to the initial-value problem for the incompressible Navier-Stokes equations \eqref{NS} with initial data $v_0$.
\end{theorem}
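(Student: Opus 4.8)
The plan is to realize the solution as a fixed point of the mild formulation via Lemma \ref{fixedpoint}, working in the Banach space $X=\mathcal{PM}^{2}\cap\mathcal{Z}^{4}$ with the norm $\vertiii{\cdot}$. Writing the Navier--Stokes equations \eqref{NS} in mild (integral) form using the Duhamel principle, a divergence-free solution satisfies
\[
v = e^{\mu t \Delta} v_0 + \mathcal{B}(v,v),
\]
where $\mathcal{B}(u,v)$ is the bilinear operator obtained by applying the heat semigroup, the Leray projection $\mathbb{P}$ onto divergence-free fields, and integrating the nonlinear term $-(u\cdot\nabla)v$ in time. Setting $x_0 = e^{\mu t\Delta}v_0$, the scheme is: (i) show $x_0\in X$ with $\vertiii{x_0}$ controlled by $\|v_0\|_{PM^2}$; (ii) show $\mathcal{B}\colon X\times X\to X$ is bounded with operator norm $\eta$; then (iii) for $\varepsilon$ small enough that $4\eta\vertiii{x_0}<1$, Lemma \ref{fixedpoint} delivers a unique small solution.

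First I would treat the linear term. On the Fourier side, $\widehat{x_0}(t,k)=e^{-\mu t|k|^2}\hat v_0(k)$, so $|k|^2|\widehat{x_0}(t,k)|\le e^{-\mu t|k|^2}\|v_0\|_{PM^2}\le \|v_0\|_{PM^2}$, giving $\|x_0\|_{\mathcal{PM}^2}\le\|v_0\|_{PM^2}$. For the $\mathcal{Z}^4$ part, I would compute $\int_0^\infty |k|^4 e^{-\mu t|k|^2}|\hat v_0(k)|\,dt = |k|^4|\hat v_0(k)|\cdot\frac{1}{\mu|k|^2}=\frac{1}{\mu}|k|^2|\hat v_0(k)|\le\frac{1}{\mu}\|v_0\|_{PM^2}$; taking the sup over $k$ yields $\|x_0\|_{\mathcal{Z}^4}\le\frac{1}{\mu}\|v_0\|_{PM^2}$. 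Hence $\vertiii{x_0}\le(1+\tfrac1\mu)\|v_0\|_{PM^2}$, which is finite and small when $\|v_0\|_{PM^2}$ is small.

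The main obstacle is step (ii), the boundedness of $\mathcal{B}$, and this is where Lemma \ref{keyLemma} is essential. On the Fourier side the bilinear term has the form
\[
\widehat{\mathcal{B}(u,v)}(t,k)=-\int_0^t e^{-\mu(t-s)|k|^2}\, m(k)\!\!\sum_{j\in\mathbb{Z}^3_*,\,j\neq k}\!\!\widehat{u}(s,j)\,\widehat{v}(s,k-j)\,ds,
\]
where $m(k)$ collects the derivative symbol $k$ and the bounded Leray-projection multiplier, so $|m(k)|\le C|k|$. To estimate $\|\mathcal{B}(u,v)\|_{\mathcal{PM}^2}$ I would bound the convolution by inserting $|\widehat u(s,j)|\le|j|^{-2}\|u\|_{\mathcal{PM}^2}$ and using the $\mathcal{Z}^4$ norm on the other factor (or symmetrically), so that after integrating the heat kernel in time the sum $\sum_{j\neq k}|j|^{-2}|k-j|^{-2}$ appears; by Lemma \ref{keyLemma} this is $\le c/|k|$, and carrying the powers of $|k|$ through shows the product is bounded by $\vertiii{u}\,\vertiii{v}$. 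The key arithmetic is that the two factors contribute $|k|^{-2}$ total decay from the summed kernel together with the available powers of $|k|$ from $m(k)$ and the prefactor, matching exactly the $|k|^2$ and the time-integrated $|k|^4$ weights demanded by the two norms; the mixed use of one factor in $\mathcal{Z}^4$ and one in $\mathcal{PM}^2$ is what lets the time integral close. Once $\mathcal{B}$ is shown bounded on $X\times X$ with some $\eta$, choosing $\varepsilon>0$ with $4\eta(1+\tfrac1\mu)\varepsilon<1$ guarantees $4\eta\vertiii{x_0}<1$, and Lemma \ref{fixedpoint} produces the desired global solution $v\in\mathcal{PM}^2\cap\mathcal{Z}^4$ with $\vertiii{v}\le 2\vertiii{x_0}$.
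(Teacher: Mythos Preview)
Your overall plan matches the paper's: set $X=\mathcal{PM}^2\cap\mathcal{Z}^4$, verify the linear estimate $\vertiii{e^{\mu t\Delta}v_0}\le(1+\mu^{-1})\|v_0\|_{PM^2}$ (which you carry out correctly, exactly as the paper does), establish bilinear boundedness of $\mathcal{B}$ on $X\times X$, and invoke Lemma~\ref{fixedpoint}. The problem is in your bilinear sketch, which as written does not close.

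You say that inserting $|\hat u(s,j)|\le|j|^{-2}\|u\|_{\mathcal{PM}^2}$ and using the $\mathcal{Z}^4$ norm on the other factor, together with ``integrating the heat kernel in time,'' produces the sum $\sum_{j}|j|^{-2}|k-j|^{-2}$. But these two moves are incompatible: using $\mathcal{Z}^4$ on $v$ means spending the $s$-integral on $|\hat v(s,k-j)|$, which yields a factor $|k-j|^{-4}$, and then the heat kernel cannot be integrated separately. Bounding the exponential by $1$ instead, the expression for the $\mathcal{PM}^2$ norm becomes
\[
|k|^{3}\sum_{j}|j|^{-2}|k-j|^{-4}\,\|u\|_{\mathcal{PM}^2}\|v\|_{\mathcal{Z}^4},
\]
and this sum is only $O(|k|^{-2})$ (the region $|k-j|\ll|k|$ contributes $\sim|k|^{-2}\sum_{m}|m|^{-4}$), so the whole thing grows like $|k|$.

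The missing ingredient, which the paper uses, is the redistribution $|k|^{2}\le2\bigl(|j|^{2}+|k-j|^{2}\bigr)$. After dropping the exponential and writing $|k|^{3}=|k|\cdot|k|^{2}$, this converts two of the $|k|$-powers into $|j|$- and $|k-j|$-powers, so that each summand factors as
\[
\frac{1}{|j|^{2}|k-j|^{2}}\cdot|j|^{4}|\hat u(s,j)|\cdot|k-j|^{2}|\hat v(s,k-j)|
\]
(plus the symmetric term). Now the $s$-supremum of $|k-j|^{2}|\hat v|$ gives $\|v\|_{\mathcal{PM}^2}$, the $s$-integral of $|j|^{4}|\hat u|$ gives $\|u\|_{\mathcal{Z}^4}$, and what remains is exactly $|k|\sum_{j}|j|^{-2}|k-j|^{-2}\le c$ by Lemma~\ref{keyLemma}. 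The same ``common form'' handles the $\mathcal{Z}^4$ estimate after Fubini and integrating the heat kernel in $t$. With this one redistribution step your argument is complete and coincides with the paper's.
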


\begin{remark}
Note that, since in particular $v \in \czfour$, it follows that $v(t,\cdot) \in PM^4$ for almost every time $t\geq 0$, as announced in the introduction.
\end{remark}

The remainder of this section is devoted to the proof of this theorem.
We formulate the Navier-Stokes problem in the fixed-point form of Lemma \ref{fixedpoint}.  To begin, we let $\mathbb{P}$ be
the Leray projector, $\mathbb{P}=\mathbb{I}-\nabla\Delta^{-1}\mathrm{div}.$  Then using vector identities,
the evolution equation in \eqref{NS} becomes
\begin{equation}\nonumber
\partial_{t}v+\mathbb{P}(\mathrm{div}(v\otimes v))=\mu\Delta v.
\end{equation}
The Duhamel formula for the Navier-Stokes equations can then be written as
\begin{equation}\label{Duhamel}
v(t,\cdot)=e^{\mu t\Delta}[v_{0}]-\int_{0}^{t}e^{\mu(t-s)\Delta}[\mathbb{P}\mathrm{div}(v\otimes v)(s,\cdot)]\ \mathrm{d}s.
\end{equation}
The associated bilinear form is then
\begin{equation}\label{bilinform}
B(F,G)=-\int_{0}^{t}e^{\mu(t-s)\Delta}[\mathbb{P}\mathrm{div}(F\otimes G)(s,\cdot)]\ \mathrm{d}s.
\end{equation}
The Fourier coefficients of the nonlinear part $B(F,G)$ are, consequently,
\begin{multline}\label{BHat}
\widehat{B(F,G)}(t,k)=-\int_{0}^{t}e^{-\mu(t-s)|k|^{2}}\left[\sum_{j=1}^3 k_j \widehat{FG_j}(s,k)  \right. \\ 
\left. -\sum_{i,j=1}^{3} \frac{k}{|k|^{2}}k_{i} k_{j}
\widehat{F_{i}G_{j}}(s,k) \right]\ \mathrm{d}s
\\
=-\int_{0}^{t}e^{-\mu(t-s)|k|^{2}}\left[\sum_{j=1}^3 k_j \sum_{\ell\in\mathbb{Z}^{3}_{*}}\hat{F} (s,\ell)\hat{G}_j(s,k-\ell)  \right. \\ \left. -\sum_{i,j=1}^{3}\frac{k}{|k|^{2}}k_{i} k_{j}
\sum_{\ell\in\mathbb{Z}^{3}_{*}}\hat{F}_{i}(s,\ell)\hat{G}_j(s,k-\ell)\right]\ \mathrm{d}s.
\end{multline}

\subsection{Linear estimates}

We begin by showing that the semigroup maps $PM^{2}$ to $\mathcal{PM}^{2}\cap\mathcal{Z}^{4}.$
We start by noting that
\begin{equation}\nonumber
\|e^{\mu t \Delta} v_{0}\|_{\mathcal{PM}^{2}} =
\sup_{k\in\mathbb{Z}^{3}_{*}}\sup_{t\geq0}|k|^{2}e^{-\mu t|k|^{2}}|\hat{v}_{0}(k)|
\leq \sup_{k\in\mathbb{Z}^{3}_{*}}|k|^{2}|\hat{v}_{0}(k)|=\|v_{0}\|_{PM^{2}}.
\end{equation}
We next compute
\begin{equation}\nonumber
\|e^{\mu t\Delta}v_{0}\|_{\mathcal{Z}^{4}} =
\sup_{k\in\mathbb{Z}^{3}_{*}}\int_{0}^{\infty}|k|^{4}e^{-\mu t|k|^{2}}|\hat{v}_{0}(k)|\ \mathrm{d}t.
\end{equation}
Evaluating this integral, we have
\begin{equation}\nonumber
\|e^{\mu t\Delta}v_{0}\|_{\mathcal{Z}^{4}}=\frac{1}{\mu}\sup_{k\in\mathbb{Z}^{3}_{*}}|k|^{2}|\hat{v}_{0}(k)|
=\frac{\|v_{0}\|_{PM^{2}}}{\mu}.
\end{equation}

We have shown that
\begin{equation}\label{linpartexist}
   \vertiii{e^{\mu t\Delta}v_{0}} \leq c\|v_{0}\|_{PM^{2}},
 \end{equation}
for $c=\max\{1,\mu^{-1}\}$.

\subsection{Bilinear estimates}
Next we will show that $B$ is a continuous bilinear form on $X \times X$, for $X=\cpmtwo \cap \czfour$.

Hereafter constants are denoted by $c$ and they may vary from one line to the next.

We estimate $B(F,G)$ in the space $\mathcal{PM}^{2};$ the relevant norm may be expressed as
\begin{equation*}
\|B(F,G)\|_{\mathcal{PM}^{2}}=\sup_{k\in\mathbb{Z}^{3}_{*}}\sup_{t\geq0}|k|^{2}|\widehat{B(F,G)}(t,k)|.
\end{equation*}
Taking the absolute value of \eqref{BHat} and making some elementary manipulations (such as applying the
triangle inequality and substituting $t$ for $+\infty$), we may bound this as
\begin{multline}\nonumber
\|B(F,G)\|_{\mathcal{PM}^{2}}
\\
\leq
2\sup_{k\in\mathbb{Z}^{3}_{*}}\int_{0}^{\infty}|k|^{3}e^{-\mu(t-s)|k|^{2}}\sum_{i,j=1}^{3}
\sum_{\ell\in\mathbb{Z}^{3}_{*},\ell\neq k}|\hat{F}_{i}(s,\ell)| |\hat{G}_{j}(s,k-\ell)|\ \mathrm{d}s.
\end{multline}
We neglect the exponential and rearrange slightly, arriving at
\begin{equation}\label{commonForm}
\|B(F,G)\|_{\mathcal{PM}^{2}}\leq2\sup_{k\in\mathbb{Z}^{3}_{*}}\sum_{i,j=1}^{3}\int_{0}^{\infty}|k|^{3}
\sum_{\ell\in\mathbb{Z}^{3}_{*},\ell\neq k}|\hat{F}_{i}(s,\ell)| |\hat{G}_{j}(s,k-\ell)|\ \mathrm{d}s.
\end{equation}
We next bound $|k|^{2}$ by $2(|k-\ell|^{2}+|\ell|^{2}),$ finding
\begin{multline}\nonumber
\|B(F,G)\|_{\mathcal{PM}^{2}}\leq 4\sup_{k\in\mathbb{Z}^{3}_{*}}\sum_{i,j=1}^{3}\int_{0}^{\infty}
|k|\sum_{\ell\in\mathbb{Z}^{3}_{*},\ell\neq k} |\ell|^{2}|\hat{F}(s,\ell)||\hat{G}(s,k-\ell)|\ \mathrm{d}s
\\
+4\sup_{k\in\mathbb{Z}^{3}_{*}}\sum_{i,j=1}^{3}\int_{0}^{\infty}
|k|\sum_{\ell\in\mathbb{Z}^{3}_{*},\ell\neq k} |\hat{F}(s,\ell)| |k-\ell|^{2}|\hat{G}(s,k-\ell)|\ \mathrm{d}s.
\end{multline}
We next adjust factors of $k,$ $\ell,$ and $k-\ell:$
\begin{multline}\nonumber
\|B(F,G)\|_{\mathcal{PM}^{2}}
\\
\leq 4\sup_{k\in\mathbb{Z}^{3}_{*}}\sum_{i,j=1}^{3}\int_{0}^{\infty}
|k|\sum_{\ell\in\mathbb{Z}^{3}_{*},\ell\neq k}\frac{1}{|k-\ell|^{2}|\ell|^{2}}
|\ell|^{4}|\hat{F}_{i}(s,\ell)||k-\ell|^{2}|\hat{G}_{j}(s,k-\ell)|\ \mathrm{d}s
\\
+4\sup_{k\in\mathbb{Z}^{3}_{*}}\sum_{i,j=1}^{3}\int_{0}^{\infty}
|k|\sum_{\ell\in\mathbb{Z}^{3}_{*},\ell\neq k}\frac{1}{|k-\ell|^{2}|\ell|^{2}}
|\ell|^{2} |\hat{F}_{i}(s,\ell)| |k-\ell|^{4}|\hat{G}_{j}(s,k-\ell)|\ \mathrm{d}s.
\end{multline}
We now take the appropriate supremums so as to be able to
recognize norms of $F$ and $G,$ arriving at the penultimate bound
\begin{multline*}
\|B(F,G)\|_{\mathcal{PM}^{2}}
\\
\leq 4
\left(\sup_{k\in\mathbb{Z}^{3}_{*}}|k|\sum_{\ell\in\mathbb{Z}^{3}_{*},\ell\neq k}\frac{1}{|k-\ell|^{2}|\ell|^{2}}\right)
\left(\|F\|_{\mathcal{Z}^{4}}\|G\|_{\mathcal{PM}^{2}}+\|F\|_{\mathcal{PM}^{2}}\|G\|_{\mathcal{Z}^{4}}\right).
\end{multline*}
By Lemma \ref{keyLemma}, we have our final estimate for $B(F,G)$ in $\mathcal{PM}^{2},$ namely
\begin{equation}\label{nonlincpmtwoexist}
\|B(F,G)\|_{\mathcal{PM}^{2}}\leq c\vertiii{F}\vertiii{G}.
\end{equation}

\begin{remark} \label{rhsimportantbd}
Observe that what we have effectively shown is that the expression appearing on the right-hand-side of \eqref{commonForm} may be estimated by $c\vertiii{F}\vertiii{G}$. This expression will appear several times throughout this work.
\end{remark}

Next we must estimate $B(F,G)$ in $\mathcal{Z}^{4};$ we may express the corresponding norm as
\begin{equation}\nonumber
\|B(F,G)\|_{\mathcal{Z}^{4}}=\sup_{k\in\mathbb{Z}^{3}_{*}}|k|^{4}\int_{0}^{\infty}|\widehat{B(F,G)}(t,k)|\ \mathrm{d}t.
\end{equation}
Substituting from \eqref{BHat} and making elementary estimates (such as applying the triangle inequality, we find
the bound
\begin{multline}\nonumber
\|B(F,G)\|_{\mathcal{Z}^{4}}
\\
\leq 2\sup_{k\in\mathbb{Z}^{3}_{*}}\sum_{i,j=1}^{3}
\int_{0}^{\infty}\int_{0}^{t}|k|^{5}e^{-\mu(t-s)|k|^{2}}\sum_{\ell\in\mathbb{Z}^{3}_{*},\ell\neq k}
|\hat{F}_{i}(s,\ell)| |\hat{G}_{j}(s,k-\ell)| \ \mathrm{d}s\mathrm{d}t.
\end{multline}
We change the order of integration, finding
\begin{multline}\nonumber
\|B(F,G)\|_{\mathcal{Z}^{4}}
\\
\leq 2\sup_{k\in\mathbb{Z}^{3}_{*}}\sum_{i,j=1}^{3}
\int_{0}^{\infty}\int_{s}^{\infty}|k|^{5}e^{-\mu(t-s)|k|^{2}}\sum_{\ell\in\mathbb{Z}^{3}_{*},\ell\neq k}
|\hat{F}_{i}(s,\ell)| |\hat{G}_{j}(s,k-\ell)| \ \mathrm{d}t\mathrm{d}s.
\end{multline}
We evaluate the integral with respect to $t,$ finding
\begin{equation}\nonumber
\|B(F,G)\|_{\mathcal{Z}^{4}}\leq \frac{2}{\mu}\sup_{k\in\mathbb{Z}^{3}_{*}}\sum_{i,j=1}^{3}
\int_{0}^{\infty}|k|^{3}e^{-\mu s|k|^{2}}\sum_{\ell\in\mathbb{Z}^{3}_{*},\ell\neq k}
|\hat{F}_{i}(s,\ell)| |\hat{G}_{j}(s,k-\ell)| \ \mathrm{d}s.
\end{equation}
Neglecting this remaining exponential, we have
\begin{equation}\nonumber
\|B(F,G)\|_{\mathcal{Z}^{4}}\leq \frac{2}{\mu}\sup_{k\in\mathbb{Z}^{3}_{*}}\sum_{i,j=1}^{3}
\int_{0}^{\infty}|k|^{3}\sum_{\ell\in\mathbb{Z}^{3}_{*},\ell\neq k}
|\hat{F}_{i}(s,\ell)| |\hat{G}_{j}(s,k-\ell)| \ \mathrm{d}s.
\end{equation}
Except for a constant factor, this right-hand side is the same as the one in \eqref{commonForm}.
We recall Remark \ref{rhsimportantbd} to conclude that
\begin{equation}\label{nonlinczfourexist}
\|B(F,G)\|_{\mathcal{Z}^{4}}\leq c\vertiii{F}\vertiii{G}.
\end{equation}

It follows from \eqref{nonlincpmtwoexist} and \eqref{nonlinczfourexist} that
\begin{equation} \label{nonlinpartexist}
\vertiii{B(F,G)} \leq c\vertiii{F}\vertiii{G},
\end{equation}
thereby establishing the desired continuity of the bilinear form on $X$.

The proof of Theorem \ref{existenceTheorem} follows in a standard way from
the linear and bilinear estimates \eqref{linpartexist} and \eqref{nonlinpartexist} together with Lemma \ref{fixedpoint}.

\section{Analyticity of solutions}\label{analyticitySection}

In this section we show that the solutions proven to exist in Theorem \ref{existenceTheorem} are actually analytic at positive times, provided the initial data is sufficiently small.

\begin{theorem}\label{analyticityTheorem}
Let $\alpha \in (0,1)$. There exists $\varepsilon>0$ such that for any $v_{0}\in PM^{2}$ satisfying
$\|v_{0}\|_{ PM^{2}} \leq \varepsilon,$ the solution
$v\in\mathcal{PM}^{2}\cap\mathcal{Z}^{4}$ obtained in Theorem \ref{existenceTheorem} with initial data $v_0$ is analytic, with radius of analyticity $R_\alpha \geq \max\{\mu \sqrt{t},\mu\alpha t\}$.
\end{theorem}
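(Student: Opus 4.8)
The plan is to follow the Gevrey-class strategy of \cite{baePAMS} and \cite{ALN4}: insert an exponential weight into the Fourier coefficients and re-run the fixed-point argument of Theorem~\ref{existenceTheorem} in a weighted version of $X=\cpmtwo\cap\czfour$. Set $\psi(t)=\max\{\mu\sqrt{t},\mu\alpha t\}$ and define, for $f\in\mathcal{D}'(\real_+\times\torus^3)$, the weighted norms
\[\|f\|_{\cpmtwo_\psi}=\sup_{k\in\mathbb{Z}^3_*}\sup_{t\geq0}|k|^2 e^{\psi(t)|k|}|\hat f(t,k)|,\qquad \|f\|_{\czfour_\psi}=\sup_{k\in\mathbb{Z}^3_*}\int_0^\infty|k|^4 e^{\psi(t)|k|}|\hat f(t,k)|\,\mathrm{d}t,\]
together with $\vertiii{f}_\psi=\|f\|_{\cpmtwo_\psi}+\|f\|_{\czfour_\psi}$ and the corresponding Banach space $X_\psi$. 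Since $\psi\geq0$ we have $\vertiii{f}\leq\vertiii{f}_\psi$, so $X_\psi\hookrightarrow X$. The point is that membership $v\in X_\psi$ forces $|\hat v(t,k)|\leq\vertiii{v}_\psi|k|^{-2}e^{-\psi(t)|k|}$, and this exponential decay of the Fourier coefficients is exactly what yields the holomorphic extension of $v(t,\cdot)$ with radius of analyticity at least $\psi(t)$. Thus it suffices to prove that the solution $v$ of Theorem~\ref{existenceTheorem} lies in $X_\psi$.

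The entire argument rests on one redistribution inequality: for all $0\leq s\leq t$ and all $k\in\mathbb{Z}^3_*$,
\[e^{\psi(t)|k|}e^{-\mu(t-s)|k|^2}\leq e^{\mu/4}\,e^{\psi(s)|k|}.\]
First I would prove this using the elementary facts $\max\{a_1,a_2\}-\max\{b_1,b_2\}\leq\max\{a_1-b_1,a_2-b_2\}$ and $\sqrt{t}-\sqrt{s}\leq\sqrt{t-s}$, which give $\psi(t)-\psi(s)\leq\max\{\mu\sqrt{t-s},\mu\alpha(t-s)\}$. Writing $m=|k|\geq1$ and $\tau=t-s$, there are two regimes: when $\sqrt{\tau}\geq\alpha\tau$ one keeps the full quadratic decay and maximizes $u-u^2$ in $u=\sqrt{\tau}\,m$ to obtain the bound $e^{\mu/4}$; when $\alpha\tau>\sqrt{\tau}$ one uses $m^2\geq m$ together with $\alpha<1$ so that the exponent is nonpositive. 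This is precisely where the hypothesis $\alpha<1$ and the two-part form of $\psi$ enter: the $\sqrt{t}$ branch is controlled by the full $|k|^2$ of the Laplacian and dominates for small $t$, while the linear branch is controlled only by $|k|$ and takes over for large $t$.

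With this inequality in hand, the linear and bilinear estimates are the estimates already carried out in Section~\ref{existenceSection}, up to the harmless constant $e^{\mu/4}$. For the linear term, applying the inequality with $s=0$ (and $\psi(0)=0$) immediately gives $\|e^{\mu t\Delta}v_0\|_{\cpmtwo_\psi}\leq e^{\mu/4}\|v_0\|_{PM^2}$, while an explicit evaluation of $\int_0^\infty|k|^4 e^{\psi(t)|k|}e^{-\mu t|k|^2}\,\mathrm{d}t$, splitting at $t=1/\alpha^2$ and substituting $u=\sqrt{t}\,|k|$ on the first piece, bounds the $\czfour_\psi$ norm by $c\|v_0\|_{PM^2}$. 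For the bilinear term, in the $\cpmtwo_\psi$ estimate one distributes the output weight via the redistribution inequality and then via $e^{\psi(s)|k|}\leq e^{\psi(s)|\ell|}e^{\psi(s)|k-\ell|}$ (from $|k|\leq|\ell|+|k-\ell|$ and $\psi\geq0$); the weighted factors are absorbed into $\|F\|_{\cpmtwo_\psi},\|F\|_{\czfour_\psi}$ and their $G$-counterparts, after which $|k|^2\leq2(|\ell|^2+|k-\ell|^2)$ and Lemma~\ref{keyLemma} proceed verbatim as in \eqref{commonForm}--\eqref{nonlincpmtwoexist}. \emph{The main obstacle is the $\czfour_\psi$ bilinear estimate}, where after Fubini the weight $e^{\psi(t)|k|}$ remains trapped inside the $t$-integral; the crux is the refined bound
\[\int_s^\infty|k|^2 e^{\psi(t)|k|}e^{-\mu(t-s)|k|^2}\,\mathrm{d}t\leq c(\mu,\alpha)\,e^{\psi(s)|k|},\]
which I would establish by the same splitting at $\tau=1/\alpha^2$ used for the linear piece. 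This reduces the integrated expression to $c|k|^3 e^{\psi(s)|k|}$ times the double sum, i.e.\ to the weighted version of the common form of Remark~\ref{rhsimportantbd}, and hence to $c\vertiii{F}_\psi\vertiii{G}_\psi$.

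Finally I would conclude by applying the fixed-point Lemma~\ref{fixedpoint} in $X_\psi$ with $x_0=e^{\mu t\Delta}v_0$: the weighted estimates give $\vertiii{x_0}_\psi\leq c\|v_0\|_{PM^2}$ and $\|B\|_{X_\psi\times X_\psi\to X_\psi}\leq\eta$ for some $\eta=\eta(\mu,\alpha)$, so for $\|v_0\|_{PM^2}\leq\varepsilon$ with $\varepsilon$ sufficiently small the lemma produces a unique small solution $w\in X_\psi$ of the Duhamel equation \eqref{Duhamel}. Because $X_\psi\hookrightarrow X$ and $\vertiii{w}\leq\vertiii{w}_\psi\leq2\vertiii{x_0}_\psi\leq2c\varepsilon$, shrinking $\varepsilon$ further makes $w$ a small solution in $X$, so the uniqueness clause of Lemma~\ref{fixedpoint} applied in $X$ identifies $w$ with the solution $v$ of Theorem~\ref{existenceTheorem}. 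Thus $v\in X_\psi$, and by the Fourier-decay observation of the first paragraph the radius of analyticity satisfies $R_\alpha\geq\psi(t)=\max\{\mu\sqrt{t},\mu\alpha t\}$, as claimed.
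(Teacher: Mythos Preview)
Your proof is correct and follows the same Gevrey-weight fixed-point strategy as the paper, but the organization differs in a few respects worth noting. The paper treats the two weights $b(t)=\sqrt{t}$ and $b(t)=\alpha t$ \emph{separately}, running the fixed-point argument twice; in each case it sets $V=e^{\mu b(t)|D|}v$ and works with $V$ in the \emph{unweighted} space $X=\cpmtwo\cap\czfour$, whereas you combine both weights into $\psi(t)=\max\{\mu\sqrt{t},\mu\alpha t\}$ up front and work with $v$ directly in the weighted space $X_\psi$. The paper's key exponential inequalities (its Claim~2) retain a residual Gaussian factor $e^{-c\mu(t-s)|k|^2}$ after redistribution, which makes the $\czfour$ bilinear estimate a clean Fubini-and-integrate; your basic redistribution inequality discards that factor, so you must reinstate it for the $\czfour_\psi$ piece via the separate ``refined bound'' $\int_s^\infty|k|^2e^{\psi(t)|k|}e^{-\mu(t-s)|k|^2}\,\mathrm{d}t\leq c\,e^{\psi(s)|k|}$. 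Both organizations are valid: your single-pass treatment with the $\max$ is a bit more economical, while the paper's retention of the Gaussian factor throughout is a bit more transparent. You also make explicit the identification of the weighted fixed point with the solution $v$ of Theorem~\ref{existenceTheorem} via the uniqueness clause of Lemma~\ref{fixedpoint} in $X$, a step the paper handles only implicitly.
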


The rest of this section is dedicated to the proof of Theorem \ref{analyticityTheorem}.

For $b(t) \geq 0$ such that $b(0)=0$ we consider the operator $e^{\mu b(t) |D|}$, where $|D|$ is the operator whose Fourier multiplier is $|k|$. In what follows we will be interested in two cases:

\begin{enumerate}
  \item $b(t) = \sqrt{t}$ and
  \item $b(t) = \alpha t$.
\end{enumerate}

Recall the Duhamel formula for the solution $v$ given in \eqref{Duhamel}. Let
$V=V(t,\cdot) \equiv e^{\mu b(t) |D|} [v(t,\cdot)]$. Then, since $v_0 = V_0$, we have:
\begin{multline} \label{Duhamelweighted}
V(t,\cdot)=
\\
e^{\mu b(t) |D|} e^{ \mu\Delta} V_{0} -\int_{0}^{t}e^{\mu b(t) |D|} e^{\mu(t-s)\Delta}[\mathbb{P}\mathrm{div}(e^{-\mu b(s)|D|}V \otimes e^{-\mu b(s)|D|}V)(s,\cdot)]
\mathrm{d}s.
\end{multline}

The new bilinear form we must deal with is

\begin{equation}\label{bilinform weighted}
B(F,G)=\int_{0}^{t}e^{\mu b(t) |D|} e^{\mu(t-s)\Delta}[\mathbb{P}\mathrm{div}(e^{-\mu b(s)|D|}F \otimes e^{-\mu b(s)|D|}G)(s,\cdot)]
\mathrm{d}s.
\end{equation}

We compute the Fourier coefficients of this new bilinear form:
The Fourier coefficients of the nonlinear part $B(F,G)$ are, consequently,
\begin{multline}\label{BweightedHat}
\widehat{B(F,G)}(t,k)=
\\
-\int_{0}^{t}e^{\mu b(t) |k|-\mu(t-s)|k|^{2}} \left[ \sum_{j=1}^3 k_j \widehat{e^{-\mu b(s)|D|}[F]e^{-\mu b(s)|D|}[G_{j}]}(s,k) \right. \\
\left. -\sum_{i,j=1}^{3}\frac{kk_{i}}{|k|^{2}} k_{j}
\widehat{e^{-\mu b(s)|D|}[F_{i}]e^{-\mu b(s)|D|}[G_{j}]}(s,k) \right] \ \mathrm{d}s.
\end{multline}

After expressing the Fourier coefficients of the products above as discrete convolutions it follows that
\begin{multline}\label{absvalBweightedHat}
|\widehat{B(F,G)}(t,k)|
\\
\leq 2|k|
\int_{0}^{t}e^{\mu b(t) |k|-\mu b(s) |k | -\mu(t-s)|k|^{2}}\sum_{i,j=1}^{3}
\sum_{\ell\in\mathbb{Z}^{3}_{*}} |\hat{F}_{i}(s,\ell)|
|\hat{G}_j(s,k-\ell)|\ \mathrm{d}s.
\end{multline}

\subsection{Linear estimates for analyticity}

As with the existence theorem we begin by showing that the operator $e^{\mu b(t) |D|} e^{ \mu\Delta} $ maps $PM^2$ continuously into $X = \cpmtwo \cap \czfour$ in both cases $b(t) = \sqrt{t}$ and $b(t)=\alpha t$.

\begin{claim}
\begin{itemize}
\item[]

\item[(i)]
If $b(t) = \sqrt{t}$ then $e^{\mu b(t)|k|-\mu t |k|^2}\leq e^{\mu/2}e^{-\mu t |k|^2/2}$.

\item[(ii)]
If $b(t) = \alpha t$ then $e^{\mu b(t)|k|-\mu t |k|^2}\leq e^{-(1-\alpha)\mu t |k|^2}$.
\end{itemize}
\end{claim}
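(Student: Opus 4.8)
The plan is to prove each part by dividing the exponent by the positive constant $\mu$ and then reducing to an elementary scalar inequality; neither case requires any estimate beyond completing the square (part (i)) or using $|k|\geq 1$ (part (ii)).

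For part (i), with $b(t)=\sqrt{t}$ the exponent equals $\mu\sqrt{t}\,|k|-\mu t|k|^{2}$, and the claimed bound $e^{\mu/2}e^{-\mu t|k|^{2}/2}$ amounts to showing $\mu\sqrt{t}\,|k|-\mu t|k|^{2}\leq \frac{\mu}{2}-\frac{\mu}{2}t|k|^{2}$. Dividing by $\mu$ and substituting $u=\sqrt{t}\,|k|\geq 0$, so that $t|k|^{2}=u^{2}$, this becomes exactly $u-\frac12 u^{2}\leq\frac12$, which after multiplying by $2$ and rearranging is $(u-1)^{2}\geq 0$. This holds for every $u\geq 0$, hence for all $t\geq 0$ and all $k$, and one sees that the constant $e^{\mu/2}$ arises precisely from the worst case $u=1$.

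For part (ii), with $b(t)=\alpha t$ the exponent is $\mu\alpha t|k|-\mu t|k|^{2}$, and we must show it does not exceed $-(1-\alpha)\mu t|k|^{2}$. After dividing by $\mu$ and adding $t|k|^{2}$ to both sides, the inequality collapses to $\alpha t|k|\leq\alpha t|k|^{2}$. The case $t=0$ is trivial, and for $t>0$ this is just $|k|\leq|k|^{2}$; since $k\in\mathbb{Z}^{3}_{*}$ we have $|k|\geq 1$, so $|k|\leq|k|^{2}$ and the bound follows for every $t\geq 0$.

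I expect no genuine obstacle here, as both statements are purely algebraic once the common factor $\mu$ is removed and the right substitution ($u=\sqrt{t}\,|k|$ in (i)) is made. The only point worth flagging is that part (ii) is exactly where the restriction $k\neq 0$ — i.e. the zero-mean convention encoded in $\mathbb{Z}^{3}_{*}$ — is used, through $|k|\geq 1$; without it the bound would fail. Care should also be taken with the sign of the exponent so that the full decay rate $-(1-\alpha)\mu t|k|^{2}$ in (ii) and the clean constant $e^{\mu/2}$ in (i) emerge as stated.
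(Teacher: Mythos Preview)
Your proof is correct and follows essentially the same approach as the paper: part (i) reduces to the scalar inequality $x-\tfrac12 x^{2}\leq\tfrac12$ (equivalently $(x-1)^{2}\geq0$) after the substitution $x=\sqrt{t}\,|k|$, and part (ii) uses $|k|\geq1$ for $k\in\mathbb{Z}^{3}_{*}$ to conclude $|k|-|k|^{2}\leq0$. Your observation that the zero-mean assumption enters precisely through the condition $|k|\geq1$ in (ii) is also exactly the point.
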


The proof of Claim 1 (i) is contained in the proof of estimate \cite[(20)]{ALN4} and the proof of Claim 1 (ii) is part of the proof of estimate \cite[(23)]{ALN4}.

Fix $v_0 \equiv V_0 \in PM^2$. We first estimate $e^{\mu b(t) |D|} e^{ \mu\Delta} V_0 $ in $\cpmtwo$.

Let $b(t) = \sqrt{t}$. Then we use Claim 1(i) to find
\begin{align*}
\|e^{\mu b(t) |D|} e^{ \mu\Delta} V_0 \|_{\cpmtwo} & = \sup_{k\in \mathbb{Z}^3_{\ast}}\sup_{ t\geq 0} |k|^2 e^{\mu b(t) |k|-\mu t |k|^2} |\hat{V_0}(k)| \\
&\leq \sup_{k\in \mathbb{Z}^3_{\ast}}\sup_{ t\geq 0} e^{\mu/2} e^{-\mu t|k|^2/2}|k|^2 |\hat{V_0}(k)| \leq e^{\mu/2}\|V_0\|_{PM^2}.
\end{align*}
Next let $b(t) = \alpha t$. Then we can use Claim 1 (ii) to get
\begin{align*}
\|e^{\mu b(t) |D|} e^{ \mu\Delta} V_0 \|_{\cpmtwo} & = \sup_{k\in \mathbb{Z}^3_{\ast}}\sup_{ t\geq 0} |k|^2 e^{\mu b(t) |k|-\mu t |k|^2} |\hat{V_0}(k)| \\
&\leq   \sup_{k\in \mathbb{Z}^3_{\ast}}\sup_{ t\geq 0} e^{-\mu (1-\alpha) t|k|^2 }|k|^2 |\hat{V_0}(k)| \leq  \|V_0\|_{PM^2}.
\end{align*}

Now we will estimate $e^{\mu b(t) |D|} e^{ \mu\Delta} V_0 $ in $\czfour$.

Consider first $b(t)=\sqrt{t}$. Then we again use Claim 1 (i) to obtain
\begin{align*}
\|e^{\mu b(t) |D|} e^{ \mu\Delta} V_0 \|_{\czfour} & = \sup_{k\in \mathbb{Z}^3_{\ast}} \int_0^{+\infty}|k|^4 e^{\mu b(t) |k|-\mu t |k|^2} |\hat{V_0}(k)| \,\mathrm{d} s \\
&\leq \sup_{k\in \mathbb{Z}^3_{\ast}}\int_0^{+\infty}|k|^4 e^{\mu/2} e^{-\mu t|k|^2/2} |\hat{V_0}(k)| \,\mathrm{d} s\\
&= \sup_{k\in \mathbb{Z}^3_{\ast}}|k|^4 e^{\mu/2} \frac{2}{\mu |k|^2} |\hat{V_0}(k)| \\
&=  \frac{2e^{\mu/2}}{\mu}\|V_0\|_{PM^2}.
\end{align*}

Finally, assume that $b(t) = \alpha t$. Then, using Claim 1 (ii), we have
\begin{align*}
\|e^{\mu b(t) |D|} e^{ \mu\Delta} V_0 \|_{\czfour} & = \sup_{k\in \mathbb{Z}^3_{\ast}} \int_0^{+\infty}|k|^4 e^{\mu b(t) |k|-\mu t |k|^2} |\hat{V_0}(k)| \,\mathrm{d} s \\
&\leq \sup_{k\in \mathbb{Z}^3_{\ast}}\int_0^{+\infty}|k|^4 e^{-\mu (1-\alpha)t|k|^2} |\hat{V_0}(k)| \,\mathrm{d} s\\
&= \sup_{k\in \mathbb{Z}^3_{\ast}}|k|^4 \frac{1}{\mu(1-\alpha) |k|^2} |\hat{V_0}(k)| \\
&=  \frac{1}{\mu (1-\alpha)}\|V_0\|_{PM^2}.
\end{align*}

We have shown that
\begin{equation} \label{linestanalyticity}
\vertiii{e^{\mu b(t) |D|} e^{ \mu\Delta} V_0} \leq c\|V_0\|_{PM^2} \equiv c\|v_0\|_{PM^2},
\end{equation}
with $c=\max\left\{e^{\mu/2},1,\frac{2e^{\mu/2}}{\mu},\frac{1}{\mu (1-\alpha)}\right\}$.

\subsection{Bilinear estimates for analyticity}

Now it remains to show that the new bilinear form \eqref{bilinform weighted} is continuous on $X \times X$.

As before, we begin by estimating $B(F,G)$ in $\cpmtwo$. We will use throughout the estimate on the absolute value of $\widehat{B(F,G)}(t,k)$ in \eqref{absvalBweightedHat}. We obtain
\begin{multline} \label{Bestcpmtwoanalyticity}
\|B(F,G)\|_{\cpmtwo}  = \sup_{k\in \mathbb{Z}^3_{\ast}}\sup_{ t\geq 0} |k|^2 |\widehat{B(F,G)}|(t,k)
\leq
\\
\sup_{k\in \mathbb{Z}^3_{\ast}}\sup_{ t\geq 0} 2|k|^3
\int_{0}^{t}e^{\mu b(t) |k|-\mu b(s) |k | -\mu(t-s)|k|^{2}}\sum_{i,j=1}^{3}
\sum_{\ell\in\mathbb{Z}^{3}_{*}} |\hat{F}_{i}(s,\ell)|
|\hat{G}_j(s,k-\ell)|\ \mathrm{d}s.
\end{multline}

We have two weights $b(t)$ to discuss, namely $b(t) = \sqrt{t}$ and $b(t) = \alpha t$.

\begin{claim}
\begin{itemize}
\item[]

\item[(i)]
If $b(t) = \sqrt{t}$ then, for all $0 \leq s \leq t$, $e^{\mu b(t) |k|-\mu b(s) |k | -\mu(t-s)|k|^{2}}\leq
e^{\mu/2}e^{-\mu (t-s) |k|^2/2}$.

\item[(ii)]
If $b(t) = \alpha t$ then, for all $0 \leq s \leq t$, $e^{\mu b(t) |k|-\mu b(s) |k | -\mu(t-s)|k|^{2}}
\leq e^{-(1-\alpha)\mu (t-s) |k|^2}$.
\end{itemize}
\end{claim}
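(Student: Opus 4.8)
The plan is to reduce both parts of Claim 2 to the already-established Claim 1 by means of the substitution $t \mapsto t-s$. Since the exponential is increasing, it suffices to bound the exponents; writing $\tau = t-s \geq 0$, the exponent in question is $\mu(b(t)-b(s))|k| - \mu\tau|k|^2$, so everything reduces to controlling the increment $b(t)-b(s)$ in each of the two cases. The two cases differ precisely in how accurately this increment can be estimated.

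For part (ii) I would note that $b(t)=\alpha t$ is linear, so the increment is exact: $b(t)-b(s) = \alpha\tau$, and the exponent becomes $\mu\alpha\tau|k| - \mu\tau|k|^2$. This is precisely the exponent appearing in Claim 1(ii) with $t$ replaced by $\tau$, so applying that claim immediately yields the bound $e^{-(1-\alpha)\mu\tau|k|^2}=e^{-(1-\alpha)\mu(t-s)|k|^2}$. Equivalently, one checks directly that the required inequality among exponents rearranges to $\alpha|k|(1-|k|)\leq 0$, which holds because $|k|\geq 1$ for $k\in\mathbb{Z}^{3}_{*}$.

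For part (i) the increment is no longer exact but only subadditive. Using $\sqrt{t}=\sqrt{\tau+s}\leq\sqrt{\tau}+\sqrt{s}$ I would obtain $b(t)-b(s)=\sqrt{t}-\sqrt{s}\leq\sqrt{\tau}$, and hence, since $\mu,|k|>0$,
\[
\mu(\sqrt{t}-\sqrt{s})|k|-\mu\tau|k|^2 \leq \mu\sqrt{\tau}|k|-\mu\tau|k|^2 .
\]
The right-hand side is exactly the exponent of Claim 1(i) evaluated at $\tau$, so it is at most $\tfrac{\mu}{2}-\tfrac{\mu\tau|k|^2}{2}$; exponentiating gives the asserted bound $e^{\mu/2}e^{-\mu(t-s)|k|^2/2}$.

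The only genuinely nontrivial input is the subadditivity estimate $\sqrt{t}-\sqrt{s}\leq\sqrt{t-s}$ used in part (i), and this is the step I would be most careful with; once it is in hand, both parts follow at once from Claim 1. If one prefers an argument independent of Claim 1, part (i) can be closed by completing the square: setting $u=\sqrt{\tau}|k|$ and using $\sqrt{t}-\sqrt{s}\leq\sqrt{\tau}$, the exponent divided by $\mu$ is at most $u-u^2 = \tfrac12-\tfrac12(u-1)^2-\tfrac12 u^2 \leq \tfrac12-\tfrac12 u^2 = \tfrac12-\tfrac12\tau|k|^2$, which is exactly the claimed bound. Either way the computation is entirely elementary, the subadditivity of the square root being the sole ingredient beyond the bounds of Claim 1.
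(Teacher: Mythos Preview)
Your argument is correct. For part~(ii) it coincides with the paper's proof: both simply use the linearity $b(t)-b(s)=\alpha(t-s)$ together with $|k|\geq 1$.

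For part~(i) your route differs from the paper's. The paper does \emph{not} reduce to Claim~1; instead it introduces the auxiliary function $a(x)=\mu x-\tfrac{\mu}{2}x^{2}$, rewrites the exponent as
\[
a(\sqrt{t}\,|k|)-a(\sqrt{s}\,|k|)-\tfrac{\mu}{2}(t-s)|k|^{2},
\]
and then proves the lemma $a(x)-a(y)\leq \tfrac{\mu}{2}$ for $x\geq y\geq 0$ by a case split on the sign of $a(y)$ (using that $a$ is eventually decreasing). Your approach instead exploits the subadditivity $\sqrt{t}-\sqrt{s}\leq\sqrt{t-s}$ to bound the exponent by the Claim~1(i) exponent evaluated at $\tau=t-s$, and then either cites Claim~1(i) or reproves it by the identity $u-u^{2}=\tfrac12-\tfrac12(u-1)^{2}-\tfrac12 u^{2}$. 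Your method is shorter and makes the parallel between Claims~1 and~2 completely transparent (Claim~2 is literally Claim~1 shifted by $s$, modulo the one-line subadditivity estimate); the paper's method is self-contained and does not appeal back to Claim~1, at the cost of the small case analysis on $a$.
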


The proof of Claim 2 (i) is contained in the proof of estimate \cite[(21)]{ALN4} and the proof of Claim 2 (ii) is part of the proof of the estimate on the bilinear term $\widehat{\overline{B}(F,G)}(t,k)$ following estimate \cite[(25)]{ALN4}.

In particular, it follows from Claim 2 that, in both cases $b(t) = \sqrt{t}$ and $b(t) = \alpha t$,
\begin{equation} \label{expestforcpmtwo}
e^{\mu b(t) |k|-\mu b(s) |k | -\mu(t-s)|k|^{2}}
\leq c,
\end{equation}
for some constant $c>0$.

Inserting \eqref{expestforcpmtwo} into \eqref{Bestcpmtwoanalyticity} we deduce that
\begin{align} \label{Bestcpmtwoanalyticitynew}
\|B(F,G)\|_{\cpmtwo} & \leq \sup_{k\in \mathbb{Z}^3_{\ast}}\sup_{ t\geq 0} 2|k|^3
\int_{0}^{t} c \sum_{i,j=1}^{3}
\sum_{\ell\in\mathbb{Z}^{3}_{*}} |\hat{F}_{i}(s,\ell)|
|\hat{G}_j(s,k-\ell)|\ \mathrm{d}s. \nonumber \\
&\leq 2c \sup_{k\in \mathbb{Z}^3_{\ast}}\sum_{i,j=1}^{3}
\int_{0}^{+\infty}|k|^3
\sum_{\ell\in\mathbb{Z}^{3}_{*}} |\hat{F}_{i}(s,\ell)|
|\hat{G}_j(s,k-\ell)|\ \mathrm{d}s.
\end{align}
The right-hand-side of \eqref{Bestcpmtwoanalyticitynew} is, up to a constant, the same as the right-hand-side of \eqref{commonForm}. We thus recall Remark \ref{rhsimportantbd} to find
\begin{equation} \label{Bnewcpmtwo}
\|B(F,G)\|_{\cpmtwo} \leq c \vertiii{F}\vertiii{G}.
\end{equation}.

Next we estimate $B(F,G)$ in $\czfour$. We have, using \eqref{absvalBweightedHat},
\begin{multline} \label{Bestczfouranalyticity}
\|B(F,G)\|_{\czfour}  = \sup_{k\in \mathbb{Z}^3_{\ast}}\int_0^{+\infty} |k|^4 |\widehat{B(F,G)}|(t,k)\,\mathrm{d}t
\\
\leq \sup_{k\in \mathbb{Z}^3_{\ast}}\int_0^{+\infty} 2|k|^5
\int_{0}^{t}\Bigg[e^{\mu b(t) |k|-\mu b(s) |k | -\mu(t-s)|k|^{2}}
\\
\sum_{i,j=1}^{3}
\sum_{\ell\in\mathbb{Z}^{3}_{*}} |\hat{F}_{i}(s,\ell)|
|\hat{G}_j(s,k-\ell)|\Bigg] \mathrm{d}s\mathrm{d}t.
\end{multline}

First assume $b(t) = \sqrt{t}$. Then, from Claim 2 (i), we find
\begin{multline} \label{Bestczfouranalyticitysqrtt}
\|B(F,G)\|_{\czfour} \leq
\\
 \sup_{k\in \mathbb{Z}^3_{\ast}}\int_0^{+\infty} 2|k|^5
\int_{0}^{t}e^{\mu/2}e^{-\mu(t-s)|k|^2/2}\sum_{i,j=1}^{3}
\sum_{\ell\in\mathbb{Z}^{3}_{*}} |\hat{F}_{i}(s,\ell)|
|\hat{G}_j(s,k-\ell)|\ \mathrm{d}s\mathrm{d}t
\\
= \sup_{k\in \mathbb{Z}^3_{\ast}}\int_0^{+\infty} 2|k|^5
\int_{s}^{+\infty}e^{\mu/2}e^{-\mu(t-s)|k|^2/2}\sum_{i,j=1}^{3}
\sum_{\ell\in\mathbb{Z}^{3}_{*}} |\hat{F}_{i}(s,\ell)|
|\hat{G}_j(s,k-\ell)|\ \mathrm{d}t\mathrm{d}s
\\
= \sup_{k\in \mathbb{Z}^3_{\ast}}\int_0^{+\infty} 2|k|^5
 e^{\mu/2} \frac{2}{\mu |k|^2} \sum_{i,j=1}^{3}
\sum_{\ell\in\mathbb{Z}^{3}_{*}} |\hat{F}_{i}(s,\ell)|
|\hat{G}_j(s,k-\ell)|\ \mathrm{d}s
 \\
= \frac{4}{\mu}e^{\mu/2}  \sup_{k\in \mathbb{Z}^3_{\ast}}\sum_{i,j=1}^{3}
\int_{0}^{+\infty}|k|^3
\sum_{\ell\in\mathbb{Z}^{3}_{*}} |\hat{F}_{i}(s,\ell)|
|\hat{G}_j(s,k-\ell)|\ \mathrm{d}s.
\end{multline}

Next suppose $b(t) = \alpha t$. Then, from Claim 2 (ii) it follows that
\begin{multline} \label{Bestczfouranalyticityalphat}
\|B(F,G)\|_{\czfour} \leq
\\
\sup_{k\in \mathbb{Z}^3_{\ast}}\int_0^{+\infty} 2|k|^5
\int_{0}^{t}e^{-(1-\alpha)\mu (t-s) |k|^2}\sum_{i,j=1}^{3}
\sum_{\ell\in\mathbb{Z}^{3}_{*}} |\hat{F}_{i}(s,\ell)|
|\hat{G}_j(s,k-\ell)|\ \mathrm{d}s\mathrm{d}t
\\
= \sup_{k\in \mathbb{Z}^3_{\ast}}\int_0^{+\infty} 2|k|^5
\int_{s}^{+\infty}e^{-(1-\alpha)\mu (t-s) |k|^2}\sum_{i,j=1}^{3}
\sum_{\ell\in\mathbb{Z}^{3}_{*}} |\hat{F}_{i}(s,\ell)|
|\hat{G}_j(s,k-\ell)|\ \mathrm{d}t\mathrm{d}s
\\
= \sup_{k\in \mathbb{Z}^3_{\ast}}\int_0^{+\infty} 2|k|^5
  \frac{1}{(1-\alpha)\mu |k|^2} \sum_{i,j=1}^{3}
\sum_{\ell\in\mathbb{Z}^{3}_{*}} |\hat{F}_{i}(s,\ell)|
|\hat{G}_j(s,k-\ell)|\ \mathrm{d}s
 \\
= \frac{2}{ (1-\alpha)\mu}  \sup_{k\in \mathbb{Z}^3_{\ast}}\sum_{i,j=1}^{3}
\int_{0}^{+\infty}|k|^3
\sum_{\ell\in\mathbb{Z}^{3}_{*}} |\hat{F}_{i}(s,\ell)|
|\hat{G}_j(s,k-\ell)|\ \mathrm{d}s.
\end{multline}

We note that the right-hand-side of both \eqref{Bestczfouranalyticitysqrtt} and \eqref{Bestczfouranalyticityalphat} are, up to a constant, the same as the right-hand-side of \eqref{commonForm}. As before we recall Remark \ref{rhsimportantbd} to deduce that
\begin{equation} \label{Bnewczfour}
\|B(F,G)\|_{\czfour} \leq c \vertiii{F}\vertiii{G}.
\end{equation}.

Putting together \eqref{Bnewcpmtwo} and \eqref{Bnewczfour} we arrive at
\begin{equation} \label{nonlinpartanalyticity}
\vertiii{B(F,G)} \leq c\vertiii{F}\vertiii{G},
\end{equation}
which shows that the weighted in time bilinear form is continuous on $X \times X$.

We can now use Lemma \ref{fixedpoint} in a standard way to produce a solution $V = e^{\mu b(t)|D|}[v(t,\cdot)] \in X$ satisfying the Duhamel formula \eqref{Duhamelweighted} for both $b(t) = \sqrt{t}$ and $b(t) = \alpha t$.

Since $V$ is in $\mathcal{PM}^{2},$ we immediately see that the Fourier coefficients of $v$ must decay, with exponential decay rate $\mu b(t).$
The exponential decay of a function's Fourier coefficients at a given rate implies analyticity of the function in a strip about the real axis, with radius of analyticity at
least as big as the exponential decay rate, by the periodic analogue of Theorem IX.13 of \cite{reedSimon}.
We therefore conclude the analyticity of $v(t,\cdot)$ for any $t > 0$, with radius of analyticity bounded below by
$\max\{\mu \sqrt{t},\mu \alpha t\}$, thereby concluding the proof of Theorem \ref{analyticityTheorem}.

\section*{Acknowledgments} DMA gratefully acknowledges support from the National Science Foundation through grant
DMS-2307638. MCLF was partially supported by CNPq, through grant \# 304990/2022-1, and FAPERJ, through  grant \# E-26/201.209/2021.
HJNL acknowledges the support of CNPq, through  grant \# 305309/2022-6, and of FAPERJ, through  grant \# E-26/201.027/2022.

\bibliographystyle{plain}
\bibliography{improved-regularity-ns-pm.bib}

\end{document}